\documentclass[11pt]{amsart}
\usepackage{amssymb,amsmath}
\usepackage[left=3cm,right=3cm,top=3cm,bottom=3cm]{geometry}
\usepackage{hyperref}

\newtheorem{thm}{Theorem}[section]
\newtheorem{prop}[thm]{Proposition}
\newtheorem{lem}[thm]{Lemma}

\theoremstyle{definition}

\newtheorem{rem}[thm]{Remark}
\newtheorem{ex}[thm]{Example}

\DeclareMathOperator{\Ad}{Ad}
\DeclareMathOperator{\ad}{ad}
\DeclareMathOperator{\dist}{dist}
\DeclareMathOperator{\tr}{tr}
\newcommand{\SU}{\mathrm{SU}}
\newcommand{\su}{\mathfrak{su}}
\newcommand{\U}{\mathrm{U}}
\newcommand{\uu}{\mathfrak{u}}
\newcommand{\A}{\mathcal{A}}
\newcommand{\C}{\mathbb{C}}
\newcommand{\R}{\mathbb{R}}
\newcommand{\nn}[1]{\|#1\|}

\title[Short curves in homogeneous spaces of the unitary group]{Short curves in homogeneous spaces of the unitary group\\ for unitarily invariant norms}
\author{Gabriel Larotonda}
\address{Instituto Argentino de Matem\'atica (CONICET) and Facultad de Ciencias Exactas y Naturales, Universidad de Buenos Aires, Argentina}
\email{glaroton@dm.uba.ar}

\subjclass[2020]{Primary 53C22; Secondary 15A60, 22F30, 53C60}
\keywords{homogeneous space, unitary orbit, flag manifold, Finsler metric, quotient metric, minimal lifting, unitarily invariant norm, convexity}

\begin{document}
\begin{abstract}
Let $\A\subset M_n(\C)$ be a unital $*$-subalgebra and let $M=\U(n)/\U(\A)$, with the quotient Finsler metric induced by a unitarily invariant norm on $\uu(n)$, normalized so that $\nn{iP}=1$ for rank-one projections $P$. This family contains the unitary orbits of Hermitian matrices, in particular flag manifolds and Grassmann manifolds. We prove that if $z_0$ is a minimal lifting of a tangent vector $X$ with $\nn{X}_o<\frac{\pi}{2}$, then the one-parameter group $t\mapsto e^{tz_0}\cdot o$ has minimal length among all piecewise smooth curves in $M$ joining its endpoints. The same holds in $\SU(n)/(\SU(n)\cap\A)$ for every $\Ad$-invariant Finsler norm on $\su(n)$, not necessarily symmetric. The proof relies on the convexity of the distance from the identity along geodesic segments, valid for all these norms. We also prove that the ball of radius $\frac{\pi}{2}$ around the identity, for the spectral norm, is convex along one-parameter segments. This extends known results for the $p$-Schatten norms with $p$ even, and gives a new proof for the operator norm, where the result is due to Dur\'an, Mata-Lorenzo and Recht.
\end{abstract}
\maketitle

\section{Introduction}

Short curves in homogeneous spaces of the unitary group have been studied in operator theory at least since the work of Porta and Recht on Grassmann manifolds in 1987 \cite{pr87}; the systematic study of homogeneous spaces of the unitary group of a C$^*$-algebra, with quotient Finsler metrics, is due to Dur\'an, Mata-Lorenzo and Recht \cite{dmlr04,dmlr05}. Let us recall the setting. If $M=G/K$, the length of a tangent vector $X$ is the distance, in the Lie algebra of $G$, from any lifting of $X$ to the Lie algebra of $K$; a lifting that realizes this distance is called \emph{minimal}. The natural question is whether the one-parameter group generated by a minimal lifting projects onto a short curve of $M$. For Riemannian metrics (or smooth enough Finsler metrics) this follows from the theory of isometric submersions. For the norms that appear in operator theory, which are neither smooth nor strictly convex, one is faced with the lack of such tools; the question is stated in full generality in \cite[Remark 3.31]{lar19}.

For the operator norm, Dur\'an, Mata-Lorenzo and Recht proved that if $Z$ is a minimal lifting, then $e^{tZ}\cdot o$ is short for $|t|\le\frac{\pi}{2\nn{Z}}$ \cite[Theorem I-2]{dmlr04}. Minimal liftings exist for W$^*$-algebras, hence in particular for every $*$-subalgebra of $M_n(\C)$, and in finite dimension this settles the case of the operator norm.

A different way to obtain minimality in the quotient is by means of \emph{convexity in the group}. A minimal lifting $z_0$ is a critical point of the distance from the identity, restricted to the fibre $e^{z_0}K$; if this distance is convex along the geodesic segments of the fibre, the critical point is a minimum. This argument was used in \cite[Theorem 4.9]{alr10} for the $p$-Schatten norms with $p$ even, where convexity comes from a trace computation, and in \cite[Theorem 5.3]{al10} for the operator norm, by letting $p\to\infty$; this gives a second proof of the result of \cite{dmlr04}, for orbits of finite-rank operators. Both techniques depend on the particular norm, and none of them applies to other unitarily invariant norms, such as the Ky Fan norms or the $p$-Schatten norms with $p$ odd.

In joint work with M.~Miglioli \cite{lm26} we proved that for every unitarily invariant norm $\nn{\cdot}$ on $\uu(n)$ and every geodesic segment $t\mapsto e^{tx}e^{y}$ of $\U(n)$, the map $t\mapsto\nn{\log(e^{tx}e^{y})}$ is convex as long as the segment stays in the neighbourhood $\{u:\nn{u-1}_\infty<\sqrt2\}$, and that this neighbourhood is optimal; the same holds in $\SU(n)$ for every $\Ad$-invariant Finsler norm on $\su(n)$, not necessarily symmetric. In this note we show that this convexity, together with a convexity property of the spectral ball of radius $\frac{\pi}{2}$ (Lemma \ref{ball}), is enough to run the argument above for every unitarily invariant norm (for the operator norm this gives a new proof of \cite[Theorem I-2]{dmlr04}), when the isotropy group is the unitary group of a $*$-subalgebra of $M_n(\C)$. Our main result is the following.

\begin{thm}\label{main}
Let $\nn{\cdot}$ be a unitarily invariant norm on $\uu(n)$ with $\nn{iP}=1$ for rank-one projections $P$, let $\A\subset M_n(\C)$ be a unital $*$-subalgebra and let $M=\U(n)/\U(\A)$ with the quotient Finsler metric. If $X\in T_oM$, $\nn{X}_o<\frac{\pi}{2}$ and $z_0\in\uu(n)$ is a minimal lifting of $X$, then the curve $\delta(t)=e^{tz_0}\cdot o$, $t\in[0,1]$, is minimal: $L(\gamma)\ge L(\delta)=\nn{X}_o$ for every piecewise smooth curve $\gamma$ in $M$ joining $o$ to $e^{z_0}\cdot o$. In particular
$$
d_M(o,e^{z_0}\cdot o)=\nn{X}_o=\nn{z_0}.
$$
\end{thm}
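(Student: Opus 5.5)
The plan is to reduce minimality in $M$ to a statement about the group $\U(n)$, using the convexity result of \cite{lm26} and the ball lemma (Lemma \ref{ball}).

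\emph{Step 1: lifting curves.} Let $\gamma:[0,1]\to M$ be piecewise smooth with $\gamma(0)=o$ and $\gamma(1)=e^{z_0}\cdot o$. Since $\A$ is a finite-dimensional $C^*$-algebra, minimal liftings exist at every tangent vector. Using the connection given by the orthogonal complement $\uu(\A)^\perp$ for the trace inner product, lift $\gamma$ horizontally to a curve $\Gamma$ in $\U(n)$ with $\Gamma(0)=1$ and $\Gamma(1)\in e^{z_0}\U(\A)$. We may not be able to arrange $\nn{\Gamma'}=\nn{\gamma'}_{\gamma}$ exactly. It is enough, however, to find for each $\epsilon>0$ a lift with $L_{\U(n)}(\Gamma)\le L(\gamma)+\epsilon$, where length in $\U(n)$ is measured with the bi-invariant Finsler metric $\nn{\cdot}$. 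To build such a lift, choose on a fine partition minimal liftings $z_j$ of $\gamma'(t_j)$ and use piecewise one-parameter pieces. Alternatively, use a measurable selection of minimal liftings and solve $\Gamma'=\Gamma\, z(t)$.

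\emph{Step 2: reduction to the group.} Given Step 1, it suffices to show that
\[
d_{\U(n)}(1,e^{z_0}k)\ge \nn{z_0}\quad\text{for all } k\in\U(\A).
\]
Then $L(\gamma)+\epsilon\ge L(\Gamma)\ge \nn{z_0}=\nn{X}_o$, and $L(\delta)\le\nn{z_0}$ holds trivially. For $u$ near the identity, the distance $d_{\U(n)}(1,u)$ equals $\nn{\log u}$ for the principal logarithm, since one-parameter groups are short in a bi-invariant Finsler group. Away from the identity the distance is large anyway. The normalization $\nn{iP}=1$ gives $\nn{x}\ge\nn{x}_\infty$. So if the spectral distance from $1$ to $u$ is at least $\pi/2$, then $d(1,u)\ge\pi/2>\nn{z_0}$ and there is nothing to prove. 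Thus we only need $\nn{\log(e^{z_0}k)}\ge\nn{z_0}$ for those $k$ with $e^{z_0}k$ in the spectral ball $B$ of radius $\pi/2$.

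\emph{Step 3: convexity on the fibre.} Fix such a $k=e^{y}$ with $y\in\uu(\A)$ and consider the segment $f(t)=\nn{\log(e^{z_0}e^{ty})}$ for $t\in[0,1]$. The endpoints $e^{z_0}$ and $e^{z_0}e^{y}$ both lie in $B$, the first because $\nn{z_0}_\infty\le\nn{z_0}<\pi/2$. By Lemma \ref{ball}, the whole segment lies in $B$. Since $|e^{i\theta}-1|<\sqrt2$ for $|\theta|<\pi/2$, the segment then lies in $\{\nn{u-1}_\infty<\sqrt2\}$, and \cite{lm26} shows that $f$ is convex. Here we use the form $e^{z_0}e^{ty}$ rather than $e^{tx}e^{y}$, which is equivalent after conjugation and inversion by invariance of the norm.

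Minimality of $z_0$ says that $f(0)=\nn{z_0}\le\nn{z_0+w}$ for all $w\in\uu(\A)$. Since $\log(e^{z_0}e^{ty})=z_0+t\,w+o(t)$ with $w$ a linear image of $y$, we get one-sided derivative $f'(0^+)\ge0$. We must check that $w$ stays in $\uu(\A)$, or else argue through the right-derivative of a convex function. Convexity then gives $f(1)\ge f(0)=\nn{z_0}$.

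The main obstacle is this derivative step. The first-order term of $\log(e^{z_0}e^{ty})$ is $\tfrac{\ad z_0}{1-e^{-\ad z_0}}(y)$, which need not lie in $\uu(\A)$. I would first try to handle it by replacing the fibre segment with $t\mapsto e^{z_0+t w}$-type comparisons. The alternative is to use that $f$ is a convex function of $t$ whose right-derivative at $0$ equals the directional derivative of $\nn{\cdot}$ at $z_0$ along $\uu(\A)$-directions. This requires showing that the deviation from $\uu(\A)$ contributes nonnegatively, using that $z_0$ is Birkhoff–James orthogonal to $\uu(\A)$ and is invariant under the relevant conjugations by $e^{sz_0}$. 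A second technical point is making the lifting in Step 1 rigorous, since the norms are non-smooth.
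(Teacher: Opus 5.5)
There is a genuine gap: the step you flag as the main obstacle, $f'(0^+)\ge 0$, is left unresolved, and without it the convexity of $f$ proves nothing. The rest of your architecture matches the paper's. Step 1 is Proposition \ref{quotdist}, which you can cite instead of constructing lifts. Step 2 rests on Theorem \ref{alv}: $d(1,u)=\nn{\log u}$ for unitarily invariant norms is a theorem, not a general fact about bi-invariant Finsler groups. Step 3 combines Lemma \ref{ball} and Theorem \ref{lm}; there, take $y$ to be the principal logarithm of $k=e^{-z_0}w$. That is the segment Lemma \ref{ball} controls, and $y$ is a polynomial in $k$, hence lies in $\mathfrak k$.

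Neither of your suggested fixes works. Curves $e^{z_0+tw}$ leave the fibre $e^{z_0}\U(\A)$ unless $w$ commutes with $z_0$. Minimality of $z_0$, i.e. Birkhoff--James orthogonality to $\mathfrak k$, says nothing about the directional derivative of the norm at $z_0$ in the direction $g(\ad z_0)y$, which is not in $\mathfrak k$.

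The missing idea is Lemma \ref{bracket}. Let $\psi$ be a norming functional of $z_0$, so $\psi(z_0)=\nn{z_0}$ and $\psi\le\nn{\cdot}$. Then $\psi$ vanishes on $[v,z_0]$ for every $v\in\uu(n)$, because $t\mapsto\psi(\Ad_{e^{tv}}z_0)\le\nn{\Ad_{e^{tv}}z_0}=\nn{z_0}$ attains its maximum at $t=0$. So the invariance you need is that the norm is constant on the whole adjoint orbit of $z_0$. Conjugation by $e^{sz_0}$, as you suggest, fixes $z_0$ and gives nothing.

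Now write $g(\lambda)=\frac{\lambda}{1-e^{-\lambda}}=1+\sum_{k\ge1}c_k\lambda^k$ with real $c_k$, and note $(\ad z_0)^ky=-[(\ad z_0)^{k-1}y,z_0]$. Hence $\psi(g(\ad z_0)y)=\psi(y)$ for every $\psi\in\partial\nn{z_0}$. The deviation of $\ell'(0)$ from $\mathfrak k$ therefore contributes exactly zero, functional by functional, not merely something nonnegative.

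Since $\ell(s)=z_0+s\,\ell'(0)+o(s)$ and the norm is Lipschitz,
\[ f'(0^+)=\max_{\psi\in\partial\nn{z_0}}\psi(g(\ad z_0)y)=\max_{\psi\in\partial\nn{z_0}}\psi(y)=\nn{\cdot}'(z_0;y)\ge 0. \]
The last inequality holds by minimality, since $ty\in\mathfrak k$. Convexity then gives $f(1)\ge f(0)=\nn{z_0}$.
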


In Section \ref{sun} we show that the same statement holds in the special unitary group, for every $\Ad$-invariant Finsler norm on $\su(n)$ (Theorem \ref{mainsu}); the main difference is that one has to control the trace of the logarithms involved. We also show that the convexity of the norm along segments holds for every $\Ad$-invariant Finsler norm on $\uu(n)$ (Proposition \ref{convadun}).

We remark that the hypothesis is on the tangent vector, and not on the curves we compare $\delta$ with (see Remark \ref{competitors}). In Section \ref{prelim} we recall the facts on unitarily invariant norms, rectifiable distances and quotient metrics that we need, and we prove the convexity of the spectral ball. Section \ref{proof} contains the proof of Theorem \ref{main}, Section \ref{sun} deals with $\SU(n)$, and in Section \ref{remarks} we discuss particular cases, the radius, and uniqueness.

\section{Preliminaries}\label{prelim}

\subsection{Unitarily invariant norms and the distance in $\U(n)$}

We will denote with $\uu(n)$ the space of skew-Hermitian $n\times n$ matrices, the Lie algebra of $\U(n)$. A norm $\nn{\cdot}$ on $\uu(n)$ is \emph{unitarily invariant} if $\nn{uxv}=\nn{x}$ for all $u,v\in\U(n)$ such that $uxv\in\uu(n)$; equivalently, $\nn{x}=\Phi(s(x))$ for a symmetric gauge function $\Phi$, where $s(x)$ is the vector of singular values of $x$. Such a norm is $\Ad$-invariant and symmetric ($\nn{-x}=\nn{x}$). Since symmetric gauge functions are monotone, $\Phi(s(x))\ge s_1(x)\Phi(1,0,\dots,0)$, that is $\nn{iP}\,\nn{x}_\infty\le\nn{x}$, where $\nn{\cdot}_\infty$ is the spectral norm and $P$ is a rank-one projection. Every unitarily invariant norm is a positive multiple of one with $\nn{iP}=1$, and we assume this normalization throughout; thus
\begin{equation}\label{comparison}
\nn{x}_\infty\le\nn{x}\qquad(x\in\uu(n)).
\end{equation}

The length of a piecewise smooth curve $\gamma\subset\U(n)$ is $L(\gamma)=\int\nn{\gamma^{-1}\dot\gamma}$, and $d$ is the induced rectifiable distance, which is bi-invariant. We write $\log$ for the principal branch of the logarithm, defined on unitaries that do not have $-1$ as an eigenvalue; note that $\nn{\log u}_\infty<\pi$ for such $u$.

\begin{thm}[{\cite[Theorem 16]{alv14}}]\label{alv}
If $z\in\uu(n)$ and $\nn{z}_\infty\le\pi$, then $t\mapsto ue^{tz}$, $t\in[0,1]$, has minimal length among the piecewise smooth curves joining $u$ to $ue^{z}$. In particular $d(u,ue^{z})=\nn{z}$.
\end{thm}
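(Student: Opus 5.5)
By bi-invariance of $d$ and of $L$ it is enough to treat $u=1$. The curve $t\mapsto e^{tz}$ has length $\int_0^1\nn{z}\,dt=\nn{z}$, so what has to be shown is the lower bound $L(\gamma)\ge\nn{z}$ for every piecewise smooth $\gamma:[0,1]\to\U(n)$ with $\gamma(0)=1$ and $\gamma(1)=e^z$. I will write $\nn{x}=\Phi(s(x))$ with $\Phi$ a symmetric gauge function. The plan is to establish a weak majorization
$$
|\lambda(z)|\prec_w \int_0^1 s(\gamma^{-1}\dot\gamma)\,dt .
$$
Here $|\lambda(z)|$ is the vector of moduli of the eigenvalues $i\lambda_j$ of $z$, and the right-hand side is understood through decreasing rearrangements, that is, through its partial sums of the $k$ largest entries. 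Once this holds, monotonicity and convexity of $\Phi$ (Ky Fan dominance) give
$$
\nn{z}=\Phi(|\lambda(z)|)\le\Phi\Big(\int_0^1 s(\gamma^{-1}\dot\gamma)\Big)\le\int_0^1\Phi(s(\gamma^{-1}\dot\gamma))=L(\gamma).
$$

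\textbf{Step 1: continuous eigenvalue angles.} I would choose continuous functions $\theta_1(t),\dots,\theta_n(t)$ such that $e^{i\theta_j(t)}$ are the eigenvalues of $\gamma(t)$ and $\theta(0)=0$. This is possible because the eigenvalues of a continuous path of matrices can be parametrized continuously, and the angles can then be lifted to $\R$. The central estimate is a Lidskii-type inequality: for every $k$ and every $t<t'$,
$$
\sum_{j\le k}\big(\theta(t')-\theta(t)\big)^{\downarrow}_j\le\sum_{j\le k}\int_t^{t'}s_j(\gamma^{-1}\dot\gamma),
$$
and likewise with $-\theta$ in place of $\theta$. I would prove this infinitesimally. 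The first step is to write $\gamma(t+h)=\gamma(t)e^{hx+o(h)}$ with $x=\gamma(t)^{-1}\dot\gamma(t)$. The second is to use the multiplicative Lidskii (Thompson) inequality for unitaries close to a given one: the eigenvalue angles of $ue^{hx}$ are obtained from those of $u$ by adding a vector majorized by $h\lambda(x)+o(h)$. The third is to integrate; the $o(h)$ terms vanish after partitioning $[0,1]$ finely, because Ky Fan partial sums are subadditive. The conclusion is that the total angle displacement $\theta(1)$ satisfies $|\theta(1)|\prec_w\int_0^1 s(\gamma^{-1}\dot\gamma)$.

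\textbf{Step 2: compare with the spectrum of $z$.} Each $\theta_j(1)$ is congruent modulo $2\pi$ to some $\lambda_{\sigma(j)}(z)$, where $\sigma$ is a permutation. Since $\nn{z}_\infty\le\pi$, every $\lambda_j(z)$ lies in $[-\pi,\pi]$, so it is a representative of minimal modulus of its class. Hence $|\lambda_{\sigma(j)}(z)|\le|\theta_j(1)|$ coordinatewise, which gives $|\lambda(z)|\prec_w|\theta(1)|$. Combined with Step 1, this yields the majorization displayed above and finishes the proof. This is exactly where the hypothesis $\nn{z}_\infty\le\pi$ enters; the endpoint value $\pi$ causes no difficulty, since both $\pm\pi$ are minimal representatives.

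\textbf{Main obstacle.} I expect Step 1 to be the delicate part. It needs the perturbation inequality for eigenvalue angles of products of unitaries, and it has to be applied uniformly along the curve, including at crossings of eigenvalues, where the labelling of the $\theta_j$ is not smooth. I would first try to work with Ky Fan sums of the decreasingly rearranged angles, which are Lipschitz functions of $t$, and bound their upper Dini derivatives by $\sum_{j\le k}s_j(\gamma^{-1}\dot\gamma)$. The infinitesimal bound comes from Weyl/Ky Fan variational characterizations applied to $\log\big(\gamma(t)^{-1}\gamma(t+h)\big)$. Only afterwards would I pass to the rearrangement-free statement used in Step 2.
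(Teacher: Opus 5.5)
The paper gives no proof of this statement. It is imported from \cite[Theorem 16]{alv14} and used as a black box in Section \ref{proof}. Your argument is therefore an independent, self-contained route, and its architecture is sound. It has three parts: a spectral-flow estimate $|\theta(1)|\prec_w\int_0^1 s(\gamma^{-1}\dot\gamma)$ for a continuous lift $\theta$ of the eigenvalue angles, valid for every piecewise smooth curve starting at $1$; the observation that when $\nn{z}_\infty\le\pi$ the eigenvalues of $-iz$ are minimal-modulus representatives of the classes of the $\theta_j(1)$ modulo $2\pi$; and then Fan dominance and Jensen's inequality for $\Phi$. One majorization inequality handles all unitarily invariant norms at once. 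Winding of long competitors is absorbed by the lift and by Step 2. Only first-order perturbation theory of the angles is needed, so no global multiplicative Lidskii or Thompson-type theorem is required; in spirit it is the first-order counterpart of the angle computations the paper recalls after Theorem \ref{lm}. Citing simply buys the paper brevity.

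Two steps need tightening.

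\emph{The integration step.} The sentence ``the $o(h)$ terms vanish after partitioning $[0,1]$ finely'' is not justified. The remainder in the degenerate first-order expansion of the angles is not uniform in $t$: near a collision of eigenvalues the second-order coefficients behave like $|x_{kj}|^2\cot\frac{\theta_k-\theta_j}{2}$ (compare the formula for $\theta_k''$ after Theorem \ref{lm}). So you cannot sum the $o(h)$ terms over a partition. Your fallback is the right fix and should be the main argument. Let $x(t)=\gamma(t)^{-1}\dot\gamma(t^+)$ and let $\mathcal C_t$ be the pinching by the spectral projections of $\gamma(t)$. Then $\theta(t+h)-\theta(t)$ equals, up to a permutation depending on $h$, $h\nu(t)+o(h)$, where $\nu(t)$ are the eigenvalues of $-i\,\mathcal C_t(x(t))$. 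This bounds the upper right Dini derivatives pointwise. The elementary lemma (if $F$ is continuous and $D^+F\le g$ everywhere with $g$ right-continuous, then $F(1)-F(0)\le\int_0^1 g$) then integrates the bound with no uniformity needed.

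\emph{Signed versus absolute bounds.} The signed inequalities you state, top-$k$ sums of $\theta(t')-\theta(t)$ and of $\theta(t)-\theta(t')$ each bounded by $\int\sum_{j\le k}s_j$, do not imply $|\theta(1)|\prec_w\int s$. For instance, $(1,-1)$ satisfies both against the bound $(1,0)$, yet $(1,1)\not\prec_w(1,0)$. There are two easy repairs.
\begin{enumerate}
\item Keep the trace. Since $\det\gamma(t)=e^{i\sum_j\theta_j(t)}$, you have $\sum_j\theta_j(1)=\int_0^1\tr(-i\gamma^{-1}\dot\gamma)$. This gives the genuine majorization $\theta(1)\prec\int_0^1\lambda^{\downarrow}(-i\gamma^{-1}\dot\gamma)$, which does imply the absolute statement.
\item Run the Dini argument directly on $G_k(t)=\sum_{j\le k}|\theta(t)|^{\downarrow}_j$. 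Use $G_k(t+h)-G_k(t)\le\sum_{j\le k}|\theta(t+h)-\theta(t)|^{\downarrow}_j$ together with $s(\mathcal C_t(x))\prec_w s(x)$.
\end{enumerate}
With these changes, Step 2 and the final chain of inequalities go through as you wrote them.
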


We denote with $d_\infty$ the rectifiable distance of the spectral norm. Every unitary has a logarithm with spectral norm at most $\pi$; hence $d_\infty(1,u)=\nn{\log u}_\infty$ if $-1$ is not an eigenvalue of $u$, and $d_\infty(1,u)=\pi$ otherwise.

\begin{thm}[{\cite[Theorems 3.9 and 4.2]{lm26}}]\label{lm}
Let $x,y\in\uu(n)$ and $\mu(s)=e^{sx}e^{y}$, $s\in[s_1,s_2]$, and assume that $\nn{\log\mu(s)}_\infty<\frac{\pi}{2}$ for all $s\in[s_1,s_2]$. Let $\theta_1(s)\ge\dots\ge\theta_n(s)$ be the arguments in $(-\pi,\pi)$ of the eigenvalues of $\mu(s)$. Then, for every $1\le m\le n$, the partial sum $\theta_1+\dots+\theta_m$ is convex and $\theta_m+\dots+\theta_n$ is concave; in particular $\theta_1$ is convex and $\theta_n$ is concave. Moreover, $s\mapsto\nn{\log\mu(s)}$ is convex for every unitarily invariant norm.
\end{thm}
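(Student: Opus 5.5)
The plan is to reduce everything to a second-variation formula for the eigenvalue arguments of $\mu$, and then pass from partial sums of arguments to Ky Fan norms and from Ky Fan norms to arbitrary unitarily invariant norms.

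\emph{Step 1: second derivative of a simple eigenvalue.} Assume first that on a subinterval all eigenvalues of $\mu(s)$ are simple. Then one can write $\mu(s)=U(s)\,\mathrm{diag}(e^{i\theta_1(s)},\dots,e^{i\theta_n(s)})\,U(s)^*$ with $U$ and the $\theta_j$ real analytic. Differentiating $\mu'=x\mu$ and working in the moving frame $v_j=U e_j$, put $x_{jk}(s)=\langle x v_k,v_j\rangle$. One obtains $\theta_j'=-i\,x_{jj}$. Differentiating once more, and computing $U'$ from the off-diagonal part of the equation, should give a formula of the form
$$
\theta_j''=\sum_{k\neq j}|x_{jk}|^2\cot\Bigl(\frac{\theta_j-\theta_k}{2}\Bigr).
$$
This is the unitary analogue of the Hermitian formula $\lambda_j''=2\sum_{k\ne j}|x_{jk}|^2/(\lambda_j-\lambda_k)$. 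The sign and constant must be checked carefully against the test case $n=2$, and fixing them is the main computation.

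\emph{Step 2: partial sums.} Sum the formula over $j\le m$. The terms with both $j,k\le m$ cancel in pairs, since $\cot$ is odd and $|x_{jk}|=|x_{kj}|$. This leaves
$$
(\theta_1+\dots+\theta_m)''=\sum_{j\le m<k}|x_{jk}|^2\cot\Bigl(\frac{\theta_j-\theta_k}{2}\Bigr).
$$
The hypothesis $\nn{\log\mu}_\infty<\frac{\pi}{2}$ gives $0\le\theta_j-\theta_k<\pi$ whenever $j\le m<k$, so every cotangent is positive. Hence the partial sum is convex, and this is exactly where the radius $\frac{\pi}{2}$ enters. Concavity of $\theta_m+\dots+\theta_n$ follows by applying the same argument to the bottom partial sums, or by writing $\theta_m+\dots+\theta_n=\tr(-i\log\mu)-(\theta_1+\dots+\theta_{m-1})$. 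Here $\tr(-i\log\mu(s))=\tr(-ix)s+\mathrm{const}$ is affine in $s$, because $\det\mu=e^{s\tr x}\det e^y$ and no branch jumps occur inside the $\frac{\pi}{2}$-ball.

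\emph{Step 3: removing simplicity.} This is the step I expect to be the main obstacle. Eigenvalue crossings destroy smoothness of the sorted $\theta_j$, while the partial sums are only Lipschitz there. I would first try a density argument. For generic perturbations $(x',y')$ of $(x,y)$, the eigenvalues of $e^{sx'}e^{y'}$ should be simple for all but finitely many $s$. On the complementary intervals Step 2 gives convexity. At the isolated crossings the sorted partial sum $\theta_1+\dots+\theta_m$ is a maximum over analytic branches, so it only acquires convex kinks. The partial sums are therefore convex for $(x',y')$. The strict inequality $\nn{\log\mu}_\infty<\frac{\pi}{2}$ is open on the compact interval, so it persists under small perturbations. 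Eigenvalues depend continuously on the matrix, and pointwise limits of convex functions are convex, which gives the general case. The claim about finitely many crossings needs a genericity argument, e.g.\ analyticity of the discriminant in $s$ together with its nonvanishing for some choice. If that fails, I would instead work directly with the Kato analytic eigenvalue branches of the analytic normal family $\mu(s)$.

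\emph{Step 4: norms.} The singular values of $\log\mu$ are the numbers $|\theta_j|$. The Ky Fan $k$-norm of $\log\mu$ therefore equals
$$
\max_{a+b\le k}\Bigl[(\theta_1+\dots+\theta_a)-(\theta_{n-b+1}+\dots+\theta_n)\Bigr].
$$
Each bracket is convex by Step 2, so the Ky Fan norms are convex in $s$. A general symmetric gauge function satisfies $\Phi(s)=\sup_w\sum_k w_k s_k^{\downarrow}$, the supremum taken over decreasing nonnegative $w$ in the dual unit ball. Each such functional is a nonnegative combination of Ky Fan norms, so $s\mapsto\nn{\log\mu(s)}$ is a supremum of convex functions, hence convex.
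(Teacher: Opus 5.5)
Your proposal is correct, and it follows the only argument the paper actually writes out. The paper does not prove Theorem~\ref{lm} itself but cites \cite{lm26}; to explain the threshold it gives exactly your Steps 1--2, namely the formula $\theta_k''=\sum_{j\ne k}\cot\frac{\theta_k-\theta_j}{2}\,|x_{kj}|^2$ for simple spectrum (your sign and constant are the right ones), the pairwise cancellation, and the positivity of the cotangents when all angles lie in $(-\frac{\pi}{2},\frac{\pi}{2})$. Your Steps 3--4 soundly supply what that sketch leaves to \cite{lm26}: multiple eigenvalues are handled by the generic-simplicity perturbation, with convex kinks at the crossings and a pointwise limit, and the norms are handled by writing a symmetric gauge function as a supremum of nonnegative combinations of Ky Fan norms. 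The latter is the same mechanism the paper uses in the proof of Proposition~\ref{convadun}, where it works with signed partial sums and von Neumann's trace inequality; that version avoids absolute values and needs only the convexity of the top sums together with the affinity of $\theta_1+\dots+\theta_n$.
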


In \cite{lm26} these statements are written for $e^{itx}e^{iy}$ with $x,y$ Hermitian, and with the hypothesis $\nn{\mu(s)-1}_\infty<\sqrt2$; this is the same, since $|e^{i\theta}-1|=2|\sin(\theta/2)|$. Note that $\nn{\log\mu(s)}_\infty=\max\{\theta_1(s),-\theta_n(s)\}$. Let us explain why $\frac{\pi}{2}$ is the natural threshold. If the eigenvalues are simple, then $\theta_k''=\sum_{j\ne k}\cot\frac{\theta_k-\theta_j}{2}\,|x_{kj}|^2$, where $x_{kj}$ are the entries of $-ix$ in an eigenbasis of $\mu(s)$ \cite[Remark 5.14]{lm26}. Summing over $k\le m$, the terms with $j\le m$ cancel in pairs, and the remaining coefficients are nonnegative as long as $0\le\theta_k-\theta_j<\pi$ for $k\le m<j$, which is the case if all the angles lie in $(-\frac{\pi}{2},\frac{\pi}{2})$.

\begin{lem}\label{ball}
Let $a,b\in\uu(n)$ with $\nn{a}_\infty,\nn{b}_\infty<\frac{\pi}{2}$, let $c=\log(e^{-a}e^{b})$ and let $\nu(s)=e^{a}e^{sc}$, $s\in[0,1]$, be the short geodesic joining $e^{a}$ to $e^{b}$. Then $s\mapsto\nn{\log\nu(s)}_\infty$ is convex in $[0,1]$, and
$$
\nn{\log\nu(s)}_\infty\le\max\{\nn{a}_\infty,\nn{b}_\infty\}<\tfrac{\pi}{2}\qquad(s\in[0,1]).
$$
In other words, the spectral ball of radius $\frac{\pi}{2}$ around $1$ is convex along the segments $s\mapsto e^ae^{sc}$ with $\nn{c}_\infty<\pi$.
\end{lem}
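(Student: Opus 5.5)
The plan is to reduce Lemma \ref{ball} to Theorem \ref{lm}, applied with $x=c$ and $y=a$, so that $\mu(s)=e^{sc}e^{a}$. This is conjugate to $\nu(s)=e^ae^{sc}$, since $\nu(s)=e^{a}\mu(s)e^{-a}$. Hence both curves have the same eigenvalues, and $\nn{\log\nu(s)}_\infty=\nn{\log\mu(s)}_\infty$. The difficulty is that Theorem \ref{lm} only applies once we already know that $\nn{\log\mu(s)}_\infty<\frac{\pi}{2}$ on the interval in question, and this is part of what we want to prove. I would therefore run a continuity (bootstrap) argument.

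Let $f(s)=\nn{\log\nu(s)}_\infty$ and $r=\max\{\nn{a}_\infty,\nn{b}_\infty\}<\frac{\pi}{2}$. First, $f$ is continuous on the set where $-1$ is not an eigenvalue of $\nu(s)$, and $f(0)=\nn{a}_\infty$. For the other endpoint, $\nu(1)=e^ae^{c}=e^{b}$ because $c=\log(e^{-a}e^{b})$, so $f(1)=\nn{b}_\infty$; here I use that $\nn{b}_\infty<\pi$, so the principal logarithm of $e^b$ is $b$. Now define
$$
S=\{\sigma\in[0,1]: f(s)<\tfrac{\pi}{2}\ \text{for all } s\in[0,\sigma]\}.
$$
This set contains $0$ and is relatively open by continuity. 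On $[0,\sigma]$ for $\sigma\in S$, Theorem \ref{lm} tells us that $\theta_1$ is convex and $\theta_n$ is concave, so $f=\max\{\theta_1,-\theta_n\}$ is convex there.

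The main obstacle is closedness: the convexity we get lives only on $[0,\sigma]$, and it has to be tied to the endpoint value $f(1)$. I would argue by contradiction. Let $\sigma^*=\sup S$ and suppose $\sigma^*<1$, or that $\sigma^*=1$ with $f$ reaching $\frac{\pi}{2}$ at $1$. The second case is impossible since $f(1)=\nn{b}_\infty<\frac{\pi}{2}$. In the first case, continuity gives $f(\sigma^*)=\frac{\pi}{2}$, and $f$ is convex on $[0,\sigma^*)$, extending continuously to $[0,\sigma^*]$; in particular $f$ increases to $\frac{\pi}{2}$ near $\sigma^*$. To rule this out, I would use the symmetric parametrization from the other endpoint: $\nu(1-t)=e^{b}e^{-tc}$, which is again of the form in Theorem \ref{lm} after conjugation. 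Running the same argument from $s=1$ produces an interval $(\tau^*,1]$ on which $f<\frac{\pi}{2}$ and $f$ is convex, with $f(\tau^*)=\frac{\pi}{2}$ if $\tau^*>0$.

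These one-sided statements do not immediately clash, so I would look for a sharper local argument at a first hitting time. At such a point an extreme angle, say $\theta_1$, equals $\frac{\pi}{2}$. The strict threshold in the eigenvalue formula quoted after Theorem \ref{lm} still gives $\theta_1''\ge 0$ near that point, because the differences $\theta_k-\theta_j$ remain below $\pi$ so long as no angle has reached $-\frac{\pi}{2}$. Convexity of $\theta_1$ then persists across the hitting point. A convex function equal to $\nn{a}_\infty<\frac{\pi}{2}$ at $0$, equal to $\frac{\pi}{2}$ at $\sigma^*$, and at most $\nn{b}_\infty$ at $1$ is impossible, and this gives the contradiction. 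I expect that making this persistence rigorous (eigenvalue crossings, and passing from the strict hypothesis of Theorem \ref{lm} to the closed interval by perturbing $a,b$ slightly inward and taking limits) is the technical heart of the proof.

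Once $S=[0,1]$, $f$ is convex on $[0,1]$, and therefore $f(s)\le\max\{f(0),f(1)\}=r<\frac{\pi}{2}$. This proves both claims.
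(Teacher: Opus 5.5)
There is a genuine gap, and it sits in the step you call the technical heart: a bootstrap in the time variable $s$ cannot close.

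Theorem \ref{lm} gives convexity of $f$ only on $[0,\sigma^*]$, and a convex function may well rise from $\nn{a}_\infty$ to $\frac{\pi}{2}$ there. The value $f(1)=\nn{b}_\infty$ produces a contradiction only if you know convexity on an interval containing $0$, $\sigma^*$ and $1$.

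The hitting-time argument gives at most convexity of $\theta_1$ on a small neighbourhood of $\sigma^*$. Once $\theta_1>\frac{\pi}{2}$, the sign condition $\theta_1-\theta_j<\pi$ requires $\theta_j>\theta_1-\pi$. That is more than ``no angle has reached $-\frac{\pi}{2}$'', and nothing controls it further along the segment, since the hypothesis of Theorem \ref{lm} has already failed. If $\theta_1$ and $\theta_n$ reach $\pm\frac{\pi}{2}$ at the same time, as always happens in $\SU(2)$, the coefficient $\cot\frac{\theta_1-\theta_n}{2}$ vanishes at $\sigma^*$ and is negative as soon as $\theta_1-\theta_n>\pi$. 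So not even local persistence is available. The argument from $s=1$ only adds convexity on $[\tau^*,1]$, which, as you note, is compatible with $f\ge\frac{\pi}{2}$ on $[\sigma^*,\tau^*]$.

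The missing idea is to run the continuity argument in the endpoints instead of in $s$, starting from the constant curve.

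Let $c_\lambda=\log(e^{-\lambda a}e^{\lambda b})$. It is well defined and continuous in $\lambda\in[0,1]$ because $d_\infty(e^{\lambda a},e^{\lambda b})\le\lambda(\nn{a}_\infty+\nn{b}_\infty)<\pi$; you also need this at $\lambda=1$ to know that $c$ exists. Put $\nu_\lambda(s)=e^{\lambda a}e^{sc_\lambda}$ and $g_\lambda(s)=\nn{\log\nu_\lambda(s)}_\infty$. Let $\Lambda$ be the set of $\lambda$ with $g_\lambda<\frac{\pi}{2}$ on \emph{all} of $[0,1]$.

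Then $0\in\Lambda$ and $\Lambda$ is open. For $\lambda\in\Lambda$, Theorem \ref{lm}, applied to $e^{sc_\lambda}e^{\lambda a}$ (which is conjugate to $\nu_\lambda$), makes $g_\lambda$ convex on the whole interval.

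Now let $\lambda_k\in\Lambda$ with $\lambda_k\to\lambda$. The eigenvalues of $\nu_\lambda(s)$ have arguments in $[-\frac{\pi}{2},\frac{\pi}{2}]$, so $g_\lambda$ is the pointwise limit of the convex functions $g_{\lambda_k}$, hence convex. Therefore $g_\lambda\le\max\{g_\lambda(0),g_\lambda(1)\}=\lambda\max\{\nn{a}_\infty,\nn{b}_\infty\}<\frac{\pi}{2}$, so $\Lambda$ is closed and equals $[0,1]$.

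Your remark about perturbing $a,b$ inward and taking limits points in this direction. It has to serve as the global continuity parameter, though, not as a local device at a hitting time.
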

\begin{proof}
Since $d_\infty(1,e^{-a}e^{b})=d_\infty(e^a,e^b)\le\nn{a}_\infty+\nn{b}_\infty<\pi$, the logarithm $c$ is well defined and $\nn{c}_\infty<\pi$. For $\lambda\in[0,1]$ let $c_\lambda=\log(e^{-\lambda a}e^{\lambda b})$, $\nu_\lambda(s)=e^{\lambda a}e^{sc_\lambda}$ and $g_\lambda(s)=\nn{\log\nu_\lambda(s)}_\infty$. By the same estimate $\lambda\mapsto c_\lambda$ is continuous, hence $(\lambda,s)\mapsto g_\lambda(s)$ is continuous where $-1$ is not an eigenvalue of $\nu_\lambda(s)$. Let
$$
\Lambda=\{\lambda\in[0,1]: g_\lambda(s)<\tfrac{\pi}{2}\ \text{for all}\ s\in[0,1]\}.
$$
Clearly $0\in\Lambda$, and $\Lambda$ is open by continuity and compactness of $[0,1]$. We claim that $\Lambda$ is closed. If $\lambda\in\Lambda$, note that $\nu_\lambda(s)=e^{\lambda a}\mu(s)e^{-\lambda a}$ with $\mu(s)=e^{sc_\lambda}e^{\lambda a}$, so by Theorem \ref{lm} $g_\lambda=\max\{\theta_1,-\theta_n\}$ is convex. Let $\lambda_k\in\Lambda$, $\lambda_k\to\lambda$. The eigenvalues of $\nu_{\lambda_k}(s)$ have arguments in $(-\frac{\pi}{2},\frac{\pi}{2})$, therefore by continuity of the spectrum those of $\nu_\lambda(s)$ have arguments in $[-\frac{\pi}{2},\frac{\pi}{2}]$. In particular $-1$ is not an eigenvalue of $\nu_\lambda(s)$, and $g_\lambda$ is the pointwise limit of the convex functions $g_{\lambda_k}$, hence convex. Since $\nu_\lambda(0)=e^{\lambda a}$, $\nu_\lambda(1)=e^{\lambda b}$ and $\lambda a,\lambda b$ have spectral norm less than $\pi$, we get
$$
g_\lambda(s)\le\max\{g_\lambda(0),g_\lambda(1)\}=\lambda\max\{\nn{a}_\infty,\nn{b}_\infty\}<\tfrac{\pi}{2},
$$
which proves the claim. By connectedness $\Lambda=[0,1]$, and the case $\lambda=1$ is the assertion of the lemma.
\end{proof}

\begin{rem}
We stress that the lemma concerns only these one-parameter segments: short geodesics for the spectral norm are far from unique. The convexity of the distance along segments was established in \cite[Section 6]{lm26}, but the invariance of the spectral ball under these segments was not considered there; it is precisely what we need below. For $\SU(2)\simeq S^3$, the lemma is the convexity of an open hemisphere, and the same example shows that the radius $\frac{\pi}{2}$ cannot be enlarged. In \cite[Corollary 3.8]{alr10} and \cite[Corollary 2.9]{al10} the radius of convexity $\frac{\pi}{4}$ was obtained, by means of the triangle inequality, for the balls of the distances $d_p$ and $d_\infty$; for the spectral norm, Lemma \ref{ball} improves it to $\frac{\pi}{2}$.
\end{rem}

\subsection{Norming functionals}

Let $z\in\uu(n)$. A real linear functional $\psi$ on $\uu(n)$ is a \emph{norming functional} of $z$ if $\psi(z)=\nn{z}$ and $\psi(v)\le\nn{v}$ for all $v\in\uu(n)$. The set $\partial\nn{z}$ of norming functionals is the subdifferential of the norm at $z$, and the directional derivative
$$
\nn{\cdot}'(z;v)=\lim_{t\to0^+}\frac{\nn{z+tv}-\nn{z}}{t}=\max\{\psi(v):\psi\in\partial\nn{z}\}
$$
exists for every $v$ \cite[Theorem 23.4]{rockafellar}.

\begin{lem}\label{bracket}
If $\psi$ is a norming functional of $z$, then $\psi([w,z])=0$ for every $w\in\uu(n)$. Consequently $\psi((\ad z)^kv)=0$ for all $v\in\uu(n)$ and $k\ge1$.
\end{lem}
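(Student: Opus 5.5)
The plan is to use the $\Ad$-invariance of the norm, that is, unitary invariance under conjugation, and differentiate along a one-parameter group of conjugations. Fix $w\in\uu(n)$ and consider $f(t)=\psi(e^{tw}ze^{-tw})$. Since $\psi$ is a norming functional of $z$ and the norm is $\Ad$-invariant,
$$
f(t)\le\nn{e^{tw}ze^{-tw}}=\nn{z}=\psi(z)=f(0)\qquad(t\in\R).
$$
So $f$ has a maximum at $t=0$. Because $\psi$ is real linear on the finite-dimensional space $\uu(n)$, $f$ is smooth. Hence $f'(0)=0$, and $f'(0)=\psi([w,z])$, which gives the first assertion.

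For the second assertion, I write $(\ad z)^kv=[z,(\ad z)^{k-1}v]=-[(\ad z)^{k-1}v,z]$. I then apply the first part with $w=(\ad z)^{k-1}v\in\uu(n)$; this is legitimate because $\uu(n)$ is closed under brackets. This yields $\psi((\ad z)^kv)=0$ for every $k\ge1$.

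There is no real obstacle. The only point to check is that conjugation by $e^{tw}$ preserves $\uu(n)$ and the norm, which is exactly the $\Ad$-invariance recorded in the preliminaries. The argument therefore works verbatim for any $\Ad$-invariant norm, symmetric or not. This matters for its later use in the $\SU(n)$ section.
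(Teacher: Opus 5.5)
Your proposal is correct and is essentially the paper's proof: you maximize $t\mapsto\psi(\Ad_{e^{tw}}z)$ at $t=0$ to get $\psi([w,z])=0$, then write $(\ad z)^kv=-[(\ad z)^{k-1}v,z]$ and apply the first part with $w=(\ad z)^{k-1}v$. Your remark that only $\Ad$-invariance and sublinearity are used also matches the paper's comment in the $\SU(n)$ section.
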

\begin{proof}
The function $h(t)=\psi(\Ad_{e^{tw}}z)$ satisfies $h(t)\le\nn{\Ad_{e^{tw}}z}=\nn{z}=h(0)$, therefore $h'(0)=\psi([w,z])=0$. For the second assertion, note that $(\ad z)^kv=-[(\ad z)^{k-1}v,z]$.
\end{proof}

This is the finite-dimensional case of \cite[Proposition 4.3]{lar19}; for smooth, strongly convex norms it is the criterion $g_z(z,[z,w])=0$ of \cite[Theorem 3.2]{lat07}.

\begin{lem}\label{derivlog}
Let $z,y\in\uu(n)$ with $\nn{z}_\infty<\pi$, and let $\ell(s)=\log(e^ze^{sy})$ for $|s|$ small. Then $\ell$ is smooth, $\ell(0)=z$, and
$$
\ell'(0)=g(\ad z)\,y,\qquad g(\lambda)=\frac{\lambda}{1-e^{-\lambda}}.
$$
Moreover, $\psi(\ell'(0))=\psi(y)$ for every norming functional $\psi$ of $z$.
\end{lem}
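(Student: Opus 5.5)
We need to prove Lemma \ref{derivlog}, covering smoothness of $\ell$, the formula for $\ell'(0)$, and the identity $\psi(\ell'(0))=\psi(y)$.

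\emph{Smoothness.} Since $\nn{z}_\infty<\pi$, the unitary $e^z$ does not have $-1$ as an eigenvalue. The same holds for $e^ze^{sy}$ when $|s|$ is small, because the spectrum depends continuously on $s$. The principal logarithm is real analytic on the open set of unitaries (indeed of invertible matrices) whose spectrum avoids $(-\infty,0]$. Hence $\ell$ is smooth near $0$, and $\ell(0)=\log e^z=z$, using $\nn{z}_\infty<\pi$.

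\emph{Derivative.} We differentiate the identity $e^{\ell(s)}=e^ze^{sy}$ at $s=0$, using the standard formula for the differential of the exponential:
$$
\frac{d}{ds}e^{\ell(s)}=e^{\ell(s)}\,\frac{1-e^{-\ad \ell(s)}}{\ad \ell(s)}\,\ell'(s).
$$
At $s=0$ this gives
$$
e^z\,\frac{1-e^{-\ad z}}{\ad z}\,\ell'(0)=e^z y,\qquad\text{that is,}\qquad f(\ad z)\,\ell'(0)=y,\quad f(\lambda)=\frac{1-e^{-\lambda}}{\lambda}.
$$

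\emph{Invertibility of $f(\ad z)$.} The eigenvalues of $\ad z$ are $i(\alpha_j-\alpha_k)$, where $i\alpha_j$ are the eigenvalues of $z$. Since $\nn{z}_\infty<\pi$, we have $|\alpha_j-\alpha_k|<2\pi$. The zeros of $f$ are at $2\pi i m$ with $m\neq 0$, so $f$ does not vanish on the spectrum of $\ad z$. Because $\ad z$ is diagonalizable, $f(\ad z)$ is invertible with inverse $g(\ad z)$, where $g=1/f$ is analytic on a neighbourhood of the spectrum. This proves $\ell'(0)=g(\ad z)\,y$.

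\emph{The identity for norming functionals.} This is the one step needing care. The function $g$ is analytic on the disc $|\lambda|<2\pi$, with $g(0)=1$, so it has an expansion
$$
g(\lambda)=1+\sum_{k\ge1}c_k\lambda^k .
$$
The spectral radius of $\ad z$ is at most $2\nn{z}_\infty<2\pi$, so the power series $\sum c_k(\ad z)^k$ converges to $g(\ad z)$, and
$$
\ell'(0)=y+\sum_{k\ge1}c_k(\ad z)^ky .
$$
By Lemma \ref{bracket}, $\psi\bigl((\ad z)^ky\bigr)=0$ for every $k\ge1$. Since $\psi$ is a continuous linear functional, it commutes with the convergent sum, and so $\psi(\ell'(0))=\psi(y)$.

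Alternatively, one can avoid the series altogether. Work in an eigenbasis of $\ad z$ (acting on $\mathfrak{gl}(n)$) and write $g(\ad z)y=y+\ad z\,(h(\ad z)y)$, where $h(\lambda)=(g(\lambda)-1)/\lambda$ is analytic near the spectrum. The element $h(\ad z)y$ lies in $\uu(n)$, because $h$ has real Taylor coefficients. Lemma \ref{bracket} then again gives $\psi(\ell'(0))=\psi(y)$.
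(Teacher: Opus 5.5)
Your proof is correct and follows the paper's argument essentially step by step: you differentiate $e^{\ell(s)}=e^ze^{sy}$ using the differential of $\exp$, invert $\frac{1-e^{-\ad z}}{\ad z}$ because the spectrum of $\ad z$ lies in the disc $|\lambda|<2\pi$, and then use Lemma \ref{bracket} to kill the higher-order terms of $g(\ad z)y$. In your series version, state explicitly that the coefficients $c_k$ are real, as the paper does and as you do in your alternative; this ensures each term $c_k(\ad z)^ky$ lies in $\uu(n)$, where the real-linear functional $\psi$ is defined.
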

\begin{proof}
The principal logarithm is analytic near $e^z$, so $\ell$ is smooth and $\ell(0)=z$. Differentiating $e^{\ell(s)}=e^ze^{sy}$ at $s=0$, and recalling that $\exp_{*z}(v)=e^z\frac{1-e^{-\ad z}}{\ad z}(v)$, we obtain $\frac{1-e^{-\ad z}}{\ad z}(\ell'(0))=y$. The eigenvalues of $\ad z$ are $i(\theta_j-\theta_k)$, where $i\theta_j$ are those of $z$, and $|\theta_j-\theta_k|\le2\nn{z}_\infty<2\pi$; since $g$ is analytic in the disk $|\lambda|<2\pi$, the operator $\frac{1-e^{-\ad z}}{\ad z}$ is invertible, with inverse $g(\ad z)$. Finally $g(\lambda)=1+\sum_{k\ge1}c_k\lambda^k$ with $c_k\in\R$, so $g(\ad z)$ preserves $\uu(n)$ and Lemma \ref{bracket} gives $\psi(g(\ad z)y)=\psi(y)$.
\end{proof}

\subsection{Quotient metrics}

Let $\A\subset M_n(\C)$ be a unital $*$-subalgebra and $K=\U(\A)=\U(n)\cap\A$, a closed subgroup of $\U(n)$ with Lie algebra $\mathfrak k=\{v\in\A: v^*=-v\}$. Let $M=\U(n)/K$, with base point $o=K$, the action $g\cdot(hK)=ghK$ and the projection $q(g)=g\cdot o$. The quotient Finsler metric is given at $o$ by
$$
\nn{X}_o=\inf\{\nn{z}: z\in\uu(n),\ q_{*1}(z)=X\}=\dist(z,\mathfrak k),
$$
where $z$ is any lifting of $X$, and it is transported to $M$ by the action. It is well defined since the norm is $\Ad$-invariant and $\Ad_k\mathfrak k=\mathfrak k$ for $k\in K$. Since $\mathfrak k$ is finite dimensional, every $X$ has a \emph{minimal lifting} $z_0$, i.e. $\nn{z_0}=\nn{X}_o$, or equivalently
\begin{equation}\label{minlift}
\nn{z_0+v}\ge\nn{z_0}\qquad\text{for all } v\in\mathfrak k.
\end{equation}
Lengths of curves in $M$ and the rectifiable distance $d_M$ are defined as usual. The following is \cite[Theorem 3.26]{lar19}.

\begin{prop}\label{quotdist}
For $g\in\U(n)$, $d_M(o,g\cdot o)=\inf\{d(1,gk): k\in K\}$.
\end{prop}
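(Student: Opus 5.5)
The proof has two inequalities. The inequality $\le$ comes from projecting curves of $\U(n)$ to $M$. The inequality $\ge$ comes from lifting a curve of $M$ to $\U(n)$, with velocities that are almost minimal liftings at every time.

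\emph{Step 1 (projection does not increase length).} Let $\alpha\subset\U(n)$ be a piecewise smooth curve from $1$ to $gk$, $k\in K$. Then $q\circ\alpha$ joins $o$ to $g\cdot o$. Differentiating $q(\alpha(t))=\alpha(t)\cdot o$ gives
$$(q\circ\alpha)^{\cdot}(t)=\alpha(t)_*\,q_{*1}\bigl(\alpha(t)^{-1}\dot\alpha(t)\bigr).$$
By the definition of the quotient metric, transported by the action, its norm is at most $\nn{\alpha^{-1}\dot\alpha}$. Hence $L(q\circ\alpha)\le L(\alpha)$. Taking the infimum over $\alpha$ and over $k$ gives $d_M(o,g\cdot o)\le\inf_k d(1,gk)$.

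\emph{Step 2 (any lift).} Let $\gamma:[0,1]\to M$ be piecewise smooth from $o$ to $g\cdot o$, and fix $\varepsilon>0$. Since $q:\U(n)\to M$ is a principal $K$-bundle, it has smooth local sections. On each smooth piece of $\gamma$ we use finitely many of them, glued by right translations by elements of $K$, to obtain a piecewise smooth lift $\beta$ with $\beta(0)=1$. Put $u(t)=\beta(t)^{-1}\dot\beta(t)$. Then $u(t)$ is a lifting of the tangent vector $\dot\gamma(t)$ transported back to $o$. Therefore
$$w(t):=\dist(u(t),\mathfrak k)=\nn{\dot\gamma(t)}_{\gamma(t)},$$
and $w$ is continuous on each piece.

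\emph{Step 3 (correcting the lift inside $K$; the main step).} We look for $\alpha=\beta k$ with $k(t)\in K$. A direct computation gives
$$\alpha^{-1}\dot\alpha=\Ad_{k^{-1}}\bigl(u+\dot k k^{-1}\bigr),$$
so by $\Ad$-invariance $\nn{\alpha^{-1}\dot\alpha}=\nn{u+\dot kk^{-1}}$. We therefore want $\dot kk^{-1}=v(t)\in\mathfrak k$ with $\nn{u(t)+v(t)}\le w(t)+\varepsilon$.

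Choosing exact minimizers pointwise raises a selection and regularity problem, which is the delicate point. We avoid it by discretizing. By uniform continuity of $u$ and $w$ on each compact smooth piece, we take a fine partition $\{t_i\}$, and let $v_i\in\mathfrak k$ be a minimizer for $u(t_i)$; it exists because $\mathfrak k$ is finite dimensional. On $[t_i,t_{i+1}]$ we then have
$$\nn{u(t)+v_i}\le\nn{u(t_i)+v_i}+\varepsilon/2=w(t_i)+\varepsilon/2\le w(t)+\varepsilon.$$
With the piecewise constant $v$, the equation $\dot k=vk$, $k(0)=1$, has the piecewise smooth solution $k(t)=e^{(t-t_i)v_i}k(t_i)$, which lies in $K$.

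\emph{Step 4 (conclusion).} The curve $\alpha=\beta k$ is piecewise smooth with $\alpha(0)=1$ and $q(\alpha(1))=\gamma(1)=g\cdot o$, so $\alpha(1)=gk'$ for some $k'\in K$. Its length satisfies
$$L(\alpha)=\int_0^1\nn{u+v}\le\int_0^1 w+\varepsilon=L(\gamma)+\varepsilon.$$
Hence $\inf_k d(1,gk)\le d(1,gk')\le L(\gamma)+\varepsilon$. Letting $\varepsilon\to0$ and taking the infimum over $\gamma$ gives the reverse inequality, which completes the proof.
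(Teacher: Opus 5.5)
Your proof is correct. It differs from the paper in that it is self-contained: the paper does not prove Proposition \ref{quotdist} but quotes \cite[Theorem 3.26]{lar19}. The paper's work is checking that result's hypotheses: path lifting because $q$ is a submersion, $L$-uniformity from left invariance, and right invariance for building almost isometric lifts. It also argues that the connectedness of the isotropy group assumed there does not matter for the inequality $d_M(o,g\cdot o)\ge\inf_k d(1,gk)$, which is the only one used later.

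Your Steps 2--4 are a finite-dimensional version of the same mechanism. Local sections of the principal bundle give the lift $\beta$. The correction $\alpha=\beta k$, with $\dot kk^{-1}$ piecewise constant in $\mathfrak k$, is an explicit almost isometric lift, and $\Ad_K$-invariance of the norm is the right invariance the paper alludes to.

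Your version gains transparency and robustness:
\begin{itemize}
\item The endpoint $\alpha(1)$ only has to lie in $gK$, so a disconnected $K$ (such as $\SU(n)\cap\A$ in Section \ref{sun}) needs no separate comment.
\item You use only subadditivity, continuity and $\Ad_K$-invariance of the norm, together with $\mathfrak k=-\mathfrak k$. The argument therefore applies verbatim to the non-symmetric norms of Theorem \ref{mainsu}.
\item You also obtain the easy inequality $\le$.
\end{itemize}

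The citation gains brevity and generality, reaching Banach--Lie groups. There you would replace the exact minimizers $v_i$ by $\varepsilon$-minimizers, which works equally well.

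One point of bookkeeping: in Step 3 the partition must refine all break points of $\beta$, including those where you switch local sections. The function $u$ jumps there even though $w$ does not, so uniform continuity of $u$ holds only on the smooth pieces of $\beta$, not of $\gamma$.
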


Let us check that the hypotheses of \cite[Theorem 3.26]{lar19} hold. The projection $q$ is a smooth submersion, hence it has the path-lifting property of \cite[Definition 3.1]{lar19} (see \cite[Remark 3.3]{lar19}); the metric of $\U(n)$ is left-invariant, hence $L$-uniform with $L\equiv1$ \cite[Remark 2.5]{lar19}, and it is also right-invariant, which is used in the construction of almost isometric lifts \cite[Theorem 3.25]{lar19}. Finally, in \cite{lar19} the isotropy group is taken to be the identity component of the stabilizer; this is not needed for the inequality $d_M(o,g\cdot o)\ge\inf_{k\in K}d(1,gk)$, which is the one we use: the proof of \cite[Theorem 3.24]{lar19} only uses that the endpoint of a lift of a curve of $M$ lies in the fibre $gK$, with $K$ the full stabilizer.

\begin{ex}
Let $h$ be a Hermitian matrix and $\A=\{h\}'$ its commutant. Then $K$ is the isotropy group of $h$ for the action $u\cdot h=uhu^*$, $M$ is the unitary orbit $\mathcal O_h=\{uhu^*:u\in\U(n)\}$, and $q_{*1}(z)=[z,h]$. These are the flag manifolds; when $h$ is a projection, $\mathcal O_h$ is a Grassmann manifold.
\end{ex}

\section{Proof of Theorem \ref{main}}\label{proof}

The curve $\delta(t)=e^{tz_0}\cdot o$ has velocity $\dot\delta(t)=e^{tz_0}\cdot X$, therefore its length is $\nn{X}_o=\nn{z_0}$. By Proposition \ref{quotdist}, it suffices to prove that
\begin{equation}\label{goal}
d(1,w)\ge\nn{z_0}\qquad\text{for every } w\in e^{z_0}K.
\end{equation}
Suppose that $w\in e^{z_0}K$ and $d(1,w)<\nn{z_0}$; we will arrive at a contradiction.

By \eqref{comparison}, $\nn{z_0}_\infty\le\nn{z_0}<\frac{\pi}{2}$. Let $Z$ be a logarithm of $w$ with $\nn{Z}_\infty\le\pi$. By Theorem \ref{alv}, $\nn{Z}=d(1,w)<\nn{z_0}$, hence $\nn{Z}_\infty<\frac{\pi}{2}$; in particular $Z=\log w$.

Let $k=e^{-z_0}w\in K$. By Lemma \ref{ball} with $a=z_0$ and $b=\log w$, the element $y=\log k$ is well defined, and the short geodesic $\nu(s)=e^{z_0}e^{sy}$, $s\in[0,1]$, joining $e^{z_0}$ to $w$, satisfies $\nn{\log\nu(s)}_\infty<\frac{\pi}{2}$. Note that $y$ is a polynomial in $k$ (interpolate $\log$ on the spectrum of $k$), hence $y\in\A$; since $y^*=-y$, we have $y\in\mathfrak k$, and $\nu$ lies in the fibre $e^{z_0}K$.

Let $f(s)=\nn{\log\nu(s)}$. Since $\nu(s)=e^{z_0}\mu(s)e^{-z_0}$ with $\mu(s)=e^{sy}e^{z_0}$ and the norm is unitarily invariant, $f(s)=\nn{\log\mu(s)}$ and $\nn{\log\mu(s)}_\infty<\frac{\pi}{2}$. By Theorem \ref{lm}, $f$ is convex in $[0,1]$.

It remains to compute the right derivative of $f$ at $s=0$. Let $\ell(s)=\log\nu(s)$. By Lemma \ref{derivlog}, $\ell(s)=z_0+s\ell'(0)+o(s)$, and since the norm is Lipschitz,
$$
f'(0^+)=\nn{\cdot}'(z_0;\ell'(0))=\max_{\psi\in\partial\nn{z_0}}\psi(\ell'(0))=\max_{\psi\in\partial\nn{z_0}}\psi(y)=\nn{\cdot}'(z_0;y).
$$
Since $y\in\mathfrak k$, by \eqref{minlift} we have $\nn{z_0+ty}\ge\nn{z_0}$ for $t>0$, hence $f'(0^+)\ge0$.

A convex function on $[0,1]$ with nonnegative right derivative at $0$ is nondecreasing. Therefore $d(1,w)=\nn{\log w}=f(1)\ge f(0)=\nn{z_0}$, a contradiction. This proves \eqref{goal}, and the theorem follows. \hfill$\square$

\section{The special unitary group}\label{sun}

Let $\nn{\cdot}$ be an $\Ad$-invariant Finsler norm on $\su(n)$, that is, a subadditive, nondegenerate and positively homogeneous function ($\nn{\lambda v}=\lambda\nn{v}$ for $\lambda\ge0$) with $\nn{\Ad_uv}=\nn{v}$ for $u\in\SU(n)$; we do not assume $\nn{-v}=\nn{v}$. Lengths of curves in $\SU(n)$ and the rectifiable distance $d$ are defined as in Section \ref{prelim}; the distance is bi-invariant, and it is symmetric if and only if the norm is. Throughout, $d(u,w)$ is the distance \emph{from} $u$ \emph{to} $w$: the infimum of $L(\gamma)=\int\nn{\gamma^{-1}\dot\gamma}$ over curves $\gamma$ starting at $u$ and ending at $w$. By compactness and nondegeneracy, $\min\{\nn{x}: x\in\su(n),\ \nn{x}_\infty=1\}>0$, and we normalize the norm so that this minimum is $1$; thus
\begin{equation}\label{comparisonsu}
\nn{x}_\infty\le\nn{x}\qquad(x\in\su(n)).
\end{equation}
Only the inequality \eqref{comparisonsu} is used below. For a unitarily invariant norm with $\nn{iP}=1$ it holds on $\su(n)$ by \eqref{comparison}, although the minimum above may be larger than $1$, since $iP\notin\su(n)$. Let $\A\subset M_n(\C)$ be a unital $*$-subalgebra, $K=\SU(n)\cap\A$, with Lie algebra $\mathfrak k=\su(n)\cap\A$, and $M=\SU(n)/K$ with the quotient Finsler metric, defined as in Section \ref{prelim}. Note that $K$ need not be connected (for instance, $\A=\C1$ gives $K=\{\omega1:\omega^n=1\}$).

\begin{thm}\label{mainsu}
With the notations above, if $X\in T_oM$, $\nn{X}_o<\frac{\pi}{2}$ and $z_0\in\su(n)$ is a minimal lifting of $X$, then $\delta(t)=e^{tz_0}\cdot o$, $t\in[0,1]$, is minimal: $L(\gamma)\ge L(\delta)=\nn{X}_o$ for every piecewise smooth curve $\gamma$ in $M$ from $o$ to $e^{z_0}\cdot o$, and $d_M(o,e^{z_0}\cdot o)=\nn{X}_o=\nn{z_0}$. Here the norm is normalized as in \eqref{comparisonsu}, and $d_M(o,\cdot)$ stands for the distance \emph{from} $o$; since the norm is not symmetric, we do not claim anything about the distance \emph{to} $o$.
\end{thm}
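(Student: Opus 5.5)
We follow the proof of Theorem \ref{main} step by step; the new work is in the $\SU(n)$ ingredients. First, $\delta$ has length $\nn{X}_o=\nn{z_0}$. By the analogue of Proposition \ref{quotdist} for $\SU(n)$, with distances taken \emph{from} $1$, it suffices to prove $d(1,w)\ge\nn{z_0}$ for every $w\in e^{z_0}K$. The hypotheses of \cite[Theorem 3.26]{lar19} are checked as before: the metric on $\SU(n)$ is bi-invariant, and only the inequality direction is needed. So suppose $w\in e^{z_0}K$ and $d(1,w)<\nn{z_0}<\frac{\pi}{2}$.

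\textbf{Step 1: a logarithm of $w$ inside $\su(n)$.} We need an analogue of Theorem \ref{alv} for $\Ad$-invariant Finsler norms on $\su(n)$. Concretely, for $z\in\su(n)$ with $\nn{z}_\infty$ small, $t\mapsto e^{tz}$ should be minimal, and $d(1,w)$ should be realised by some $Z\in\su(n)$ with $e^Z=w$ and $\nn{Z}=d(1,w)$. I would take this from \cite{lm26} or \cite{alv14}, or derive it from the convexity results there.

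Granting this, \eqref{comparisonsu} gives $\nn{Z}_\infty\le\nn{Z}<\frac{\pi}{2}$. Hence $Z=\log w$ is the principal logarithm. This is the main obstacle: the principal logarithm of an element of $\SU(n)$ need not be traceless, since the eigenvalue arguments may sum to a nonzero multiple of $2\pi$. Here the bound $\nn{\log w}_\infty<\frac{\pi}{2}$ saves us. The trace of $\log w$ is $i$ times the sum of the arguments, which lies in $2\pi i\mathbb Z$, while the spectral bound gives $|\tr\log w|<n\pi/2$. This alone does not force the trace to vanish. However $\log w=Z$ and $Z\in\su(n)$, so $\tr Z=0$. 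The same argument shows $\tr z_0=0$ trivially.

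\textbf{Step 2: the segment.} Let $k=e^{-z_0}w\in K$. Lemma \ref{ball}, applied with $a=z_0$ and $b=\log w$, shows that $y=\log k$ exists, that $\nn{y}_\infty<\pi$, and that $\nu(s)=e^{z_0}e^{sy}$ satisfies $\nn{\log\nu(s)}_\infty<\frac{\pi}{2}$. We must check that $y\in\su(n)$ and that $\log\nu(s)\in\su(n)$ for all $s$. For the latter, $s\mapsto\tr\log\nu(s)$ is continuous with values in $2\pi i\mathbb Z$, because $\det\nu(s)=1$. Since $\log$ is continuous on the region $\nn{\log\cdot}_\infty<\frac{\pi}{2}$, this trace is constant, and it equals $\tr z_0=0$ at $s=0$. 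For $y$, we have $\det k=1$, so $\tr y\in 2\pi i\mathbb Z$. Moreover $\det e^{sy}=e^{s\tr y}$ and $\det\nu(s)=\det e^{\log\nu(s)}=1$ for all $s$, so $e^{s\tr y}=1$ for all $s$, which forces $\tr y=0$. As in Section \ref{proof}, $y$ is a polynomial in $k$, so $y\in\A\cap\su(n)=\mathfrak k$.

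\textbf{Step 3: convexity and the derivative.} Let $f(s)=\nn{\log\nu(s)}$, which is conjugate to $\nn{\log(e^{sy}e^{z_0})}$. We need convexity of $f$ for an $\Ad$-invariant, possibly nonsymmetric, norm on $\su(n)$, under the spectral bound $<\frac{\pi}{2}$. This is the $\SU(n)$ statement of \cite{lm26} quoted in the introduction. Alternatively, one can deduce it from the majorization statement in Theorem \ref{lm}. The partial-sum convexity of the angles gives convex dependence of the eigenvalue vector in the majorization order. Since the traces vanish, an $\Ad$-invariant convex function of the eigenvalues, which is Schur-convex, yields convexity.

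Lemmas \ref{bracket} and \ref{derivlog} hold verbatim for nonsymmetric norms on $\su(n)$, since they only use $\Ad$-invariance and sublinearity. Hence $f'(0^+)=\nn{\cdot}'(z_0;y)\ge0$ by \eqref{minlift} with $y\in\mathfrak k$; only the direction $+y$ is used, so symmetry is never needed. Convexity then gives $d(1,w)=\nn{Z}=f(1)\ge f(0)=\nn{z_0}$, a contradiction. I expect the hardest points to be the $\SU(n)$ version of Theorem \ref{alv} in Step 1, which supplies a traceless logarithm realising the distance, and the convexity for nonsymmetric norms in Step 3.
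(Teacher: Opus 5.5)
\textbf{Gap in the trace step.} Your overall architecture is the paper's: reduce to endpoints via Proposition \ref{quotdist}, use the fibre segment $\nu$ from Lemma \ref{ball}, get convexity from \cite{lm26}, and get $f'(0^+)\ge0$ from minimality of $z_0$. But the one genuinely new point of the $\SU(n)$ case, the tracelessness of the logarithms, is not proved.

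In Step 1 you observe, correctly, that the spectral bound alone does not force $\tr\log w=0$. You then close this by assuming that $d(1,w)$ is attained by some $Z\in\su(n)$ with $e^Z=w$. Neither reference you name provides this global ``minimizing logarithm'' statement for $\Ad$-invariant, possibly nonsymmetric, norms on $\su(n)$: \cite{alv14} concerns unitarily invariant norms on $\uu(n)$, and \cite{lm26} concerns convexity. This assumption is exactly where the difficulty lies, as $e^{2\pi i/5}1\in\SU(5)$ shows.

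In Step 2 your derivation of $\tr y=0$ is circular. You use $\det\nu(s)=1$ for all $s$, but $\det\nu(s)=e^{s\tr y}$, so this holds if and only if $\tr y=0$. When $\tr y\in2\pi i\mathbb Z\setminus\{0\}$, the segment $\nu$ leaves $\SU(n)$.

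\textbf{The missing idea.} It is the continuity argument you already use in Step 2, applied to the right curves; the paper packages it as Lemma \ref{trace}. If $\gamma$ is a continuous curve in $\SU(n)$ with $\gamma(0)=1$ that never has $-1$ as an eigenvalue, then $\tr\log\gamma(t)$ is continuous with values in $2\pi i\mathbb Z$, hence identically $0$.
\begin{enumerate}
\item Take a curve $\gamma$ in $\SU(n)$ from $1$ to $w$ with $L(\gamma)<\frac{\pi}{2}$. By \eqref{comparisonsu} its spectral length is less than $\frac{\pi}{2}$, so $\nn{\log\gamma(t)}_\infty=d_\infty(1,\gamma(t))<\frac{\pi}{2}$. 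This gives $\log w\in\su(n)$ and $\nn{\log w}_\infty\le d(1,w)$.
\item After that you only need the \emph{local} fact $d(1,e^z)=\nn{z}$ for $z\in\su(n)$ with $\nn{z}_\infty<\pi$, which the paper takes from \cite[Theorem 5.19(3)]{lm23}. This is the first half of your black box.
\item For $y$, apply the same argument to $\lambda\mapsto e^{-\lambda z_0}e^{\lambda\log w}$. This curve now lies in $\SU(n)$ and stays at spectral distance less than $\pi$ from $1$.
\item Alternatively, note that $\tr\log\nu(s)-s\tr y$ is continuous, $2\pi i\mathbb Z$-valued and zero at $s=0$. Evaluating at $s=1$ and using $\tr\log w=0$ gives $\tr y=0$.
\end{enumerate}

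With $\tr y=0$ you get $\nu(s)\in\SU(n)$ and $\log\nu(s)\in\su(n)$, and the rest of your argument goes through. This includes your majorization alternative for convexity, which is essentially Proposition \ref{convadun}.
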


The proof follows the lines of Section \ref{proof}; let us list the ingredients and the changes. Theorem \ref{alv} is replaced by \cite[Theorem 5.19(3) and Corollary 5.16]{lm23}: if $z\in\su(n)$ and $\nn{z}_\infty<\pi$, then $d(u,ue^z)=\nn{z}$, hence $ue^{tz}$, $t\in[0,1]$, is short; this holds for norms which are only positively homogeneous (we only use it for $\nn{z}_\infty<\frac{\pi}{2}$). The convexity in Theorem \ref{lm} is replaced by \cite[Theorems 5.8 and 5.9]{lm26}: if $\mu(s)=e^{sx}e^{y}=e^{x(s)}$ with $x(s)\in\su(n)$ and $\nn{x(s)}_\infty<\frac{\pi}{2}$ on an interval, then $s\mapsto\nn{x(s)}$ is convex there. The hypothesis $x(s)\in\su(n)$ is essential, and it is the reason for Lemma \ref{trace} below. Lemmas \ref{bracket} and \ref{derivlog} only use $\Ad$-invariance and sublinearity of the norm, and $\ad z$ preserves $\su(n)$; they hold verbatim (recall that \cite[Theorem 23.4]{rockafellar} holds for finite convex functions, therefore for non-symmetric norms). Proposition \ref{quotdist} holds by \cite[Theorem 3.26]{lar19}, whose framework allows non-symmetric norms (\cite[Definition 2.2]{lar19}), and as remarked in Section \ref{prelim} the connectedness of $K$ plays no role.

The new point is that the principal logarithm of an element of $\SU(n)$ is not necessarily traceless (consider $e^{2\pi i/5}1\in\SU(5)$, whose principal logarithm has spectral norm $\frac{2\pi}{5}<\frac{\pi}{2}$ and trace $2\pi i$; its spectral distance to $1$ in $\U(5)$ is $\frac{2\pi}{5}$, but its logarithms in $\su(5)$ have spectral norm at least $\frac{8\pi}{5}$, so by Lemma \ref{trace}(1) below its distance to $1$ in $\SU(5)$ is at least $\frac{\pi}{2}$ for every normalized norm). The following observation takes care of this.

\begin{lem}\label{trace}
Let $\gamma:[0,1]\to\SU(n)$ be continuous with $\gamma(0)=1$, and such that $-1$ is not an eigenvalue of $\gamma(t)$ for $t\in[0,1]$. Then $\log\gamma(t)\in\su(n)$ for all $t$. In particular:
\begin{enumerate}
\item if $w\in\SU(n)$ and $d(1,w)<\frac{\pi}{2}$, then $\log w\in\su(n)$, $\nn{\log w}_\infty\le d(1,w)$ and $d(1,w)=\nn{\log w}$;
\item if $a,b\in\su(n)$ and $\nn{a}_\infty,\nn{b}_\infty<\frac{\pi}{2}$, then $\log(e^{-a}e^b)\in\su(n)$, and the geodesic $\nu$ of Lemma \ref{ball} satisfies $\log\nu(s)\in\su(n)$ for $s\in[0,1]$.
\end{enumerate}
\end{lem}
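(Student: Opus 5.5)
The main assertion is a continuity argument for the trace. Since $-1$ is never an eigenvalue of $\gamma(t)$, the principal logarithm is continuous (indeed analytic) on the open set of unitaries without $-1$ in the spectrum. Hence $t\mapsto\log\gamma(t)$ is continuous. Its trace $\tau(t)=\tr\log\gamma(t)$ satisfies $e^{\tau(t)}=\det\gamma(t)=1$, so $\tau(t)\in2\pi i\mathbb Z$. A continuous function into a discrete set on $[0,1]$ is constant, and $\tau(0)=\tr\log 1=0$. Therefore $\log\gamma(t)$ is skew-Hermitian and traceless, that is, $\log\gamma(t)\in\su(n)$, for all $t$.

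For (1), take $w\in\SU(n)$ with $d(1,w)<\frac{\pi}{2}$ and choose a piecewise smooth curve $\gamma$ in $\SU(n)$ from $1$ to $w$ with $L(\gamma)<\frac{\pi}{2}$. By \eqref{comparisonsu}, the spectral length of every initial segment $\gamma|_{[0,t]}$ is at most $L(\gamma)<\frac{\pi}{2}$. Hence $d_\infty(1,\gamma(t))<\frac{\pi}{2}<\pi$, where $d_\infty$ is the spectral distance in $\U(n)$, which is at most the one computed in $\SU(n)$. By the description of $d_\infty$ recalled in Section \ref{prelim}, $-1$ is not an eigenvalue of $\gamma(t)$ and $\nn{\log\gamma(t)}_\infty=d_\infty(1,\gamma(t))\le L(\gamma|_{[0,t]})$. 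The first part then gives $\log w\in\su(n)$. Taking the infimum over curves gives $\nn{\log w}_\infty\le d(1,w)<\frac{\pi}{2}<\pi$. Now \cite[Theorem 5.19(3) and Corollary 5.16]{lm23}, quoted above, applies to $z=\log w\in\su(n)$ and yields $d(1,w)=d(1,e^{z})=\nn{\log w}$.

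For (2), apply the first part to two curves. First take $\gamma(t)=e^{-ta}e^{tb}$, which lies in $\SU(n)$ because $a,b\in\su(n)$. As in the proof of Lemma \ref{ball}, $d_\infty(1,\gamma(t))\le t(\nn{a}_\infty+\nn{b}_\infty)<\pi$, so $-1$ is never an eigenvalue along $\gamma$. This gives $c=\log(e^{-a}e^b)\in\su(n)$, so $\nu(s)=e^ae^{sc}\in\SU(n)$. For $\nu$ itself, use the path $t\mapsto e^{ta}$ from $1$ to $e^a$ followed by $\nu$; this path starts at $1$ and lies in $\SU(n)$. Along the first piece the spectral norm of $\log e^{ta}=ta$ is below $\frac{\pi}{2}$, and along $\nu$ Lemma \ref{ball} gives $\nn{\log\nu(s)}_\infty<\frac{\pi}{2}$. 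So $-1$ never appears in the spectrum, and the first part yields $\log\nu(s)\in\su(n)$. The only delicate step is the justification in (1) that $-1$ avoids the spectrum along a short curve; it rests on the comparison \eqref{comparisonsu} and on the identity $d_\infty(1,u)=\pi$ whenever $-1$ is an eigenvalue of $u$.
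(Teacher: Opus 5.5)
Your proposal is correct and follows essentially the same route as the paper. The paper also uses the trace argument ($\tr\log\gamma(t)\in2\pi i\mathbb Z$, continuous, zero at $t=0$), bounds the spectral length of a short curve via \eqref{comparisonsu} for (1) and then invokes \cite[Theorem 5.19(3)]{lm23}, and for (2) applies the first part to $\lambda\mapsto e^{-\lambda a}e^{\lambda b}$ and to $\lambda\mapsto e^{\lambda a}$ concatenated with $\nu$.
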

\begin{proof}
Since $\det e^{A}=e^{\tr A}$, we have $\tr\log\gamma(t)\in2\pi i\mathbb Z$; this is a continuous function of $t$ which vanishes at $t=0$, hence it is identically $0$. For (1), let $\gamma$ be a curve in $\SU(n)$ joining $1$ to $w$ with $L(\gamma)<\frac{\pi}{2}$. By \eqref{comparisonsu}, its length for the spectral norm is less than $\frac{\pi}{2}$, therefore $\nn{\log\gamma(t)}_\infty=d_\infty(1,\gamma(t))<\frac{\pi}{2}$ for all $t$ (here $d_\infty$ is the spectral distance of $\U(n)$), and the first assertion applies. Taking the infimum over such $\gamma$ gives $\nn{\log w}_\infty\le d(1,w)$, and \cite[Theorem 5.19(3)]{lm23} gives $d(1,w)=\nn{\log w}$. For (2), apply the first assertion to $\lambda\mapsto e^{-\lambda a}e^{\lambda b}$, and to the concatenation of $\lambda\mapsto e^{\lambda a}$, $\lambda\in[0,1]$, with $s\mapsto\nu(s)$; neither curve has $-1$ as an eigenvalue, by the estimates in the proof of Lemma \ref{ball} and since $\nn{\lambda a}_\infty<\frac{\pi}{2}$.
\end{proof}

\begin{proof}[Proof of Theorem \ref{mainsu}]
Suppose that $w\in e^{z_0}K$ and $d(1,w)<\nn{z_0}<\frac{\pi}{2}$. By \eqref{comparisonsu}, $\nn{z_0}_\infty<\frac{\pi}{2}$, and by Lemma \ref{trace}(1), $\log w\in\su(n)$, $\nn{\log w}_\infty<\frac{\pi}{2}$ and $d(1,w)=\nn{\log w}$. Let $k=e^{-z_0}w\in K$ and $y=\log k$. By Lemma \ref{trace}(2), $y\in\su(n)$, and since $y$ is a polynomial in $k$, $y\in\A$; hence $y\in\mathfrak k$. By Lemmas \ref{ball} and \ref{trace}(2), $\nu(s)=e^{z_0}e^{sy}$ satisfies $\log\nu(s)\in\su(n)$ and $\nn{\log\nu(s)}_\infty<\frac{\pi}{2}$ for $s\in[0,1]$. Writing $\nu(s)=e^{z_0}\mu(s)e^{-z_0}$ with $\mu(s)=e^{sy}e^{z_0}$, by $\Ad$-invariance and \cite[Theorem 5.9]{lm26} the function $f(s)=\nn{\log\nu(s)}=\nn{\log\mu(s)}$ is convex in $[0,1]$. The computation of $f'(0^+)\ge0$ is the same as in Section \ref{proof}, and therefore $d(1,w)=f(1)\ge f(0)=\nn{z_0}$, a contradiction.
\end{proof}

\subsection{$\Ad$-invariant norms on $\uu(n)$}\label{adun}

The convexity of the norm along segments does not need the norm to be unitarily invariant, nor the trace to vanish. Let $\nn{\cdot}$ be an $\Ad$-invariant Finsler norm on $\uu(n)$, not necessarily symmetric.

\begin{prop}\label{convadun}
Let $x,y\in\uu(n)$ and $\mu(s)=e^{sx}e^{y}$, and assume that $\nn{\log\mu(s)}_\infty<\frac{\pi}{2}$ for $s\in[s_1,s_2]$. Then $s\mapsto\nn{\log\mu(s)}$ is convex in $[s_1,s_2]$.
\end{prop}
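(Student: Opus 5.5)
We reduce Proposition \ref{convadun} to the traceless case, where \cite[Theorems 5.8 and 5.9]{lm26} give convexity for every $\Ad$-invariant Finsler norm on $\su(n)$. The idea is to split off the centre.

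\emph{Splitting.} Write $\uu(n)=\su(n)\oplus i\R 1$ and decompose $x=x_0+i\alpha 1$, $y=y_0+i\beta 1$ with $x_0,y_0\in\su(n)$ and $\alpha,\beta\in\R$. Since the centre commutes with everything,
$$\mu(s)=e^{i(s\alpha+\beta)}\,e^{sx_0}e^{y_0}=e^{i(s\alpha+\beta)}\mu_0(s).$$
Put $L(s)=\log\mu(s)$. Because $\tr L(s)$ is continuous and $\det\mu(s)=e^{i n(s\alpha+\beta)}$, we get $\tr L(s)=in(s\alpha+\beta)+2\pi i m$ for a constant integer $m$ on the interval. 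Hence
$$L(s)=L_0(s)+i(s\alpha+\beta')1,$$
where $L_0(s)\in\su(n)$ depends continuously on $s$ and $e^{L_0(s)}=\omega\,\mu_0(s)$, with $\omega$ a fixed $n$-th root of unity. The central part is therefore \emph{affine} in $s$.

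\emph{Reduction to $\su(n)$.} Define on $\su(n)\times\R$ the function
$$N(v,t)=\nn{v+it1}.$$
It is sublinear, and it is $\Ad$-invariant in $v$. For fixed $(v_0,t_0)$, consider the map
$$\phi(s)=N\bigl(L_0(s),\,s\alpha+\beta'\bigr).$$
We need convexity of $\phi$ along a curve whose first component is the logarithm in $\su(n)$ of the segment $\omega e^{sx_0}e^{y_0}=e^{sx_0}(\omega e^{y_0})$ in $\SU(n)$, and whose second component is affine. Note that $\omega e^{y_0}\in\SU(n)$, so this is again a segment of the form treated in \cite{lm26}.

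\emph{Support functionals.} Convexity of a continuous function follows if, at each point $s$, some affine minorant touches it there. Write $N(v,t)=\max_\psi\bigl[\psi_0(v)+t\,\psi_1\bigr]$ over the norming functionals of $\nn{\cdot}$. By Lemma \ref{bracket}, a functional $\psi$ norming $L(s)$ vanishes on $[\uu(n),L(s)]$. It is therefore a functional of the kind used in \cite{lm26}: the convexity there is proved by showing that, for a norming functional $\psi_0$ of $L_0(s)$ (which annihilates brackets), the function $\sigma\mapsto\psi_0(L_0(\sigma))$ lies above its tangent line, or more precisely has nonnegative second-order behaviour, under the spectral hypothesis. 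In the sum $\psi_0(L_0(\sigma))+\psi_1(\sigma\alpha+\beta')$ the second term is affine, so the same argument goes through. Concretely, one first checks that $\nn{L_0(s)}_\infty<\pi/2$; this is where the main obstacle lies.

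\emph{The obstacle.} The hypothesis controls $\nn{L(s)}_\infty$, not $\nn{L_0(s)}_\infty$. Removing the trace shifts all eigenvalue angles by the common value $s\alpha+\beta'$, which can push the spread beyond $\pi/2$. So we should not use the $\su(n)$ result as a black box. Instead, we redo its mechanism directly on $\uu(n)$, via the eigenvalue formula of the remark after Theorem \ref{lm}. The curvature terms $\cot\frac{\theta_k-\theta_j}{2}|x_{kj}|^2$ depend only on differences of angles, and central shifts do not change these. The criterion used in \cite[Section 5]{lm26} is $0\le\theta_k-\theta_j<\pi$, which holds since all angles lie in $(-\pi/2,\pi/2)$.

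The plan is then as follows:
\begin{enumerate}
\item Take the support-functional representation and the fact, from \cite{lm26}, that the map $\sigma\mapsto\psi(\log\mu(\sigma))$ has second derivative $\sum_{k,j}\cot\frac{\theta_k-\theta_j}{2}|x_{kj}|^2\,(\text{weights})\ge0$, valid whenever the eigenvalue angles have spread less than $\pi$. Here $\psi$ is a functional annihilating brackets at $\log\mu(s)$.
\item Conclude that $\nn{\log\mu(\cdot)}$ is supported from below at each $s$ by a function that is convex near $s$.
\item Handle the non-simple spectrum case by approximation: take $x$ generic, and pass to the limit using the fact that pointwise limits of convex functions are convex.
\end{enumerate}
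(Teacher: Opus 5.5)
\textbf{The gap is in step 1, and it breaks steps 1--2.} For a \emph{fixed} linear functional $\psi$ norming $\log\mu(s)$, the function $\sigma\mapsto\psi(\log\mu(\sigma))$ is in general not convex near $s$. It is strictly concave there. The formula $\theta_k''=\sum_{j\ne k}\cot\frac{\theta_k-\theta_j}{2}|x_{kj}|^2$ only governs the eigenvalues, whereas $\psi\circ\log\mu$ also sees the rotation of the eigenvectors.

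To see this, take a point with simple spectrum and write $\log\mu(\sigma)=iU(\sigma)\Theta(\sigma)U(\sigma)^*$ with $U(s)=1$. Write $\psi(ih)=\tr(ah)$; since $\psi$ kills brackets, $a=\mathrm{diag}(a_1,\dots,a_n)$, and since $\psi$ is norming, $a_k\ge a_j$ for $k<j$. Differentiating $\mu'=x\mu$ gives $|U'(s)_{kj}|^2=|x_{kj}|^2/(4\sin^2u_{kj})$, and then
$$
\frac{d^2}{d\sigma^2}\Big|_{\sigma=s}\psi(\log\mu(\sigma))=\sum_{k<j}(a_k-a_j)\,|x_{kj}|^2\Big(\cot u_{kj}-\frac{u_{kj}}{\sin^2u_{kj}}\Big),\qquad u_{kj}=\frac{\theta_k-\theta_j}{2}\in(0,\tfrac{\pi}{2}).
$$
Here $\cot u-u/\sin^2u=(\sin u\cos u-u)/\sin^2u<0$. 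So your minorants are concave at the contact point. This already happens for the Frobenius norm with $n=2$, $y$ diagonal and $x$ having a nonzero off-diagonal entry. The claimed nonnegative second derivative is therefore false and cannot be what \cite{lm26} proves. Step 2 then has no convex local minorants to work with.

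\textbf{The fix: maximise over the unitary orbit before composing with $\log\mu$.} Replace $\psi$ by $\sigma\mapsto\max_{u\in\U(n)}\psi(\Ad_u\log\mu(\sigma))$.
\begin{itemize}
\item By von Neumann's trace inequality this equals $\sum_k\alpha_k\theta_k(\sigma)$, where $\alpha_1\ge\dots\ge\alpha_n$ are the eigenvalues of $a$.
\item By $\Ad$-invariance it is still $\le\nn{\log\mu(\sigma)}$.
\item It depends only on the angles, so Abel summation writes it as $\sum_{k<n}(\alpha_k-\alpha_{k+1})S_k+\alpha_nS_n$ with $S_k=\theta_1+\dots+\theta_k$.
\item Theorem \ref{lm} gives convexity of $S_k$ for $k<n$.
\item Your determinant computation from the splitting step shows $S_n$ is affine. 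This is exactly what is needed, since $\alpha_n$ may be negative.
\end{itemize}
Taking the maximum over all $a$ in the dual ball exhibits $\nn{\log\mu(\cdot)}$ as a supremum of convex functions. This is the paper's proof.

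This route needs neither the support-line lemma, nor the reduction to $\su(n)$ (which you rightly abandoned), nor your step 3. Step 3 as stated would also need an argument that perturbing $x$ makes the spectrum of $\mu(s)$ simple for \emph{every} $s$ in the interval, not just for generic $s$.
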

\begin{proof}
Write $\log\mu(s)=ih(s)$ with $h(s)$ Hermitian, with eigenvalues $\theta_1(s)\ge\dots\ge\theta_n(s)$ (the arguments of Theorem \ref{lm}), and let $S_k=\theta_1+\dots+\theta_k$. Since the norm is sublinear, $\nn{ih}=\max\{\tr(ah):a\in B\}$, where $B$ is the (compact) set of Hermitian $a$ with $\tr(ah')\le\nn{ih'}$ for all Hermitian $h'$; by $\Ad$-invariance, $B$ is invariant under $a\mapsto uau^*$. For a fixed Hermitian $a$ with eigenvalues $\alpha_1\ge\dots\ge\alpha_n$, a classical inequality of von Neumann gives $\max_{u\in\U(n)}\tr(uau^*h)=\sum_k\alpha_k\theta_k$, where $\theta_k$ are the eigenvalues of $h$ in nonincreasing order (see \cite[Chapter III]{bhatia}). Hence
$$
\nn{\log\mu(s)}=\max_{a\in B}\ \sum_{k=1}^{n-1}(\alpha_k-\alpha_{k+1})S_k(s)+\alpha_nS_n(s).
$$
By Theorem \ref{lm}, $S_k$ is convex for $k<n$, and the coefficients $\alpha_k-\alpha_{k+1}$ are nonnegative. On the other hand $iS_n(s)=\tr\log\mu(s)$ and $e^{\tr\log\mu(s)}=\det\mu(s)=e^{s\tr x+\tr y}$; therefore $\tr\log\mu(s)-s\tr x-\tr y$ is a continuous function with values in $2\pi i\mathbb Z$, hence constant, and $S_n$ is affine. Thus each term is convex, whatever the sign of $\alpha_n$, and so is their maximum.
\end{proof}

For $\su(n)$ this gives a short proof of the convexity in \cite[Theorems 5.8 and 5.9]{lm26}, without Kostant's convexity theorem. To obtain Theorem \ref{main} in $\U(n)/\U(\A)$ for these norms (normalized so that $\nn{x}_\infty\le\nn{x}$), the proof of Section \ref{proof} goes through with Proposition \ref{convadun} in place of Theorem \ref{lm}, provided Theorem \ref{alv} holds, at least for $\nn{z}_\infty<\frac{\pi}{2}$. For unitarily invariant norms this is \cite{alv14}, and for $\su(n)$ it is \cite{lm23}; but \cite{lm23} deals only with compact groups with semisimple Lie algebra, and for general $\Ad$-invariant norms on $\uu(n)$ we know it only near the identity \cite[Theorem 4.11]{lar19}. It would be natural to conjecture that $d(1,e^z)=\nn{z}$ whenever $\nn{z}_\infty\le\pi$.

\section{Remarks}\label{remarks}

\begin{rem}[Particular cases]
For the Frobenius norm, the quotient metric is the normal homogeneous Riemannian metric of $M$. For the $p$-Schatten norms with $p$ even, Theorem \ref{main} recovers \cite[Theorem 4.9]{alr10} in finite dimension and for these isotropy groups, with the radius $\frac{\pi}{4}$ improved to $\frac{\pi}{2}$. For the operator norm, Theorem \ref{main} is \cite[Theorem I-2]{dmlr04}; our proof is different. The new cases are the $p$-Schatten norms for $p$ odd, the Ky Fan norms and, in general, every unitarily invariant norm other than the operator norm. Note also that, by \eqref{goal}, the infimum in Proposition \ref{quotdist} is attained at $k=1$: the distance in $M$ is computed by the minimal lifting.
\end{rem}

\begin{rem}[The competitors]\label{competitors}
The smallness assumption is on $X$, and the curves we compare $\delta$ with are arbitrary. The proof only uses the endpoint $w$ of a lift of the competitor (Proposition \ref{quotdist}); if the competitor is shorter than $\delta$, then $\nn{\log w}_\infty<\frac{\pi}{2}$, and convexity is applied, not to the competitor, but to the segment $\nu$ of the fibre, which stays in the spectral ball of radius $\frac{\pi}{2}$ by Lemma \ref{ball}. Since $d_M(o,e^{z_0}\cdot o)=\nn{X}_o$, the curve $\delta$ is also minimal among rectifiable curves, measured with their metric length: the metric length of a curve is never smaller than the distance between its endpoints.
\end{rem}

\begin{rem}[The radius]
The hypothesis $\nn{X}_o<\frac{\pi}{2}$ guarantees that $w$ and, by Lemma \ref{ball}, the whole segment $\nu$ lie in the region where Theorem \ref{lm} holds, and that region is optimal \cite{lm26}. For Grassmann manifolds the same radius appears in \cite{pr87} for the operator norm, and in \cite[Theorem 20]{alv14} for unitarily invariant norms. For the operator norm, the minimality of $\delta$ with $\nn{X}_o<\frac{\pi}{2}$ is \cite[Theorem I-2]{dmlr04}. A second proof, for orbits of finite-rank operators, was given in \cite[Theorem 5.3]{al10} by means of the convexity of the distance along the segment of the fibre; this requires the segment to stay in the spectral ball of radius $\frac{\pi}{2}$, and Lemma \ref{ball} makes this step explicit: when the generators have finite rank, all the unitaries involved lie in $1+B(F)$ for a finite-dimensional subspace $F$, and the problem reduces to $\U(F)$. The same discussion applies to Theorem \ref{mainsu}, with \cite[Theorems 5.8 and 5.9]{lm26} in place of Theorem \ref{lm}; the optimality of the region of convexity in $\SU(n)$ is \cite[Remark 6.4]{lm26}, by the same example $\SU(2)\simeq S^3$. The radius cannot be improved in general: for the spectral norm on $\mathbb{CP}^1\simeq S^2$ (the Grassmann manifold of lines in $\C^2$) one has $\nn{X}_o=\theta$, the principal angle, and $\delta$ stops being short at $\theta=\frac{\pi}{2}$. For a given norm the optimal radius may be larger; for instance, for the Frobenius norm on $\mathbb{CP}^1$ the minimal lifting has singular values $(\theta,\theta)$, and $\delta$ is short up to $\nn{X}_o=\pi/\sqrt{2}$.
\end{rem}

\begin{rem}[Uniqueness]
If $s\mapsto\nn{\log\nu(s)}$ is strictly convex for every $w\ne e^{z_0}$ in the fibre with $d(1,w)\le\nn{z_0}$, the same argument shows that $e^{z_0}$ is the unique point of $e^{z_0}K$ closest to $1$. This is a statement about endpoints of lifts, not about curves: a competitor of length $\nn{z_0}$ only has lifts of length at most $\nn{z_0}+\varepsilon$, and uniqueness of $\delta$ would require an additional argument, for instance the existence of isometric lifts. A word of caution: strict convexity may fail along the fibre, since if $y$ commutes with $z_0$, then $\nn{\log\nu(s)}=\nn{z_0+sy}$. By \cite[Theorem 5.13]{lm26}, it holds for orbit norms with distinct weights when the generators do not commute. We do not pursue the question of uniqueness here.
\end{rem}

\begin{rem}[Other settings]
The argument only needs: (i) convexity of the distance from the identity along geodesic segments, in a neighbourhood containing the relevant segments of the fibre; (ii) logarithms of elements of the isotropy group, near the identity, in its Lie algebra; and (iii) Proposition \ref{quotdist}. In $\U(n)$, (i) is Theorem \ref{lm}; in $\SU(n)$ it is \cite[Theorems 5.8 and 5.9]{lm26}, and (ii) needs in addition the control of the trace of Lemma \ref{trace}. The extension to the unitary groups of operator ideals would require an infinite-dimensional version of Theorem \ref{lm}.
\end{rem}

\section*{Acknowledgements}
The author thanks Mart\'in Miglioli for many conversations on these topics. This research was partially supported by CONICET, ANPCyT and Universidad de Buenos Aires.

The author used an AI assistant (Claude, Anthropic) to help draft and edit the text, to check the references against the original sources, and to suggest arguments for some auxiliary results (e.g. Proposition \ref{convadun}), all of which were verified by the author. The author takes full responsibility for the content of this paper.

\end{document}